\newtheorem{theorem}{Theorem}
\newtheorem{corollary}{Corollary}
\newtheorem{lemma}{Lemma}
\theoremstyle{remark} }
\newcommand{\Prob}{{\mathbb{P}}}
\tikzstyle{RV} = [circle, minimum width=1cm,text centered, draw=black]
\tikzstyle{dist} = [rectangle,rounded corners, text centered, draw=black]
\tikzstyle{arrow} = [thick,->,>=stealth]
\tikzstyle{desc} = [thick,dashed]
\tikzstyle{line} = [thick]
\begin{document}

\title{Hidden independence in unstructured probabilistic models}
\author{Antony Pearson \and Manuel E. Lladser}
\email{manuel.lladser@colorado.edu}
\maketitle

\begin{abstract}
We describe a novel way to represent the probability distribution of a random binary string as a mixture having a maximally weighted component associated with independent (though not necessarily identically distributed) Bernoulli characters. We refer to this as the \textit{latent independent weight} of the probabilistic source producing the string, and derive a combinatorial algorithm to compute it. The decomposition we propose may serve as an alternative to the Boolean paradigm of hypothesis testing, or to assess the fraction of uncorrupted samples originating from a source with independent marginals. In this sense, the latent independent weight quantifies the maximal amount of independence contained within a probabilistic source, which, properly speaking, may not have independent marginals.
\end{abstract}

\section{Introduction}
\label{sec:intro}

Consider the Bayesian network~\cite{probGraphMod} in Figure~\ref{fig:BN96indep}, given in~\cite[Chapter 2]{graphmod}. As the reader may find familiar, each random variable (node) in the network, given the configurations of its parents, is by definition conditionally independent from its non-descendants. Accordingly, the joint probability mass function of the binary random vector $(P,T,S,L,X)$ factorizes as follows:
\[\Prob(P,T,S,L,X) = \Prob(P)\cdot \Prob(T)\cdot \Prob(S\mid T)\cdot \Prob(L\mid P,T)\cdot \Prob(X\mid L).
\]
In particular, the joint distribution of $P$, $T$, $S$, $L$ and $X$ can be encoded with $10$ free parameters. Perhaps unexpectedly, however, one can represent this joint distribution as a mixture with a heavily weighted ``independent'' component. Specifically:
\begin{equation}
\Prob = 0.94\cdot Be(0.02)\otimes Be(0.005)\otimes Be(0.6)\otimes Be(0.01)\otimes Be(0.6) + 0.06\cdot R,
\label{ide:TubPne}
\end{equation}
where $Be(p)$ denotes a Bernoulli distribution with success probability $p$, the operator $\otimes$ denotes a product of Bernoulli distributions (i.e. the joint distribution of independent Bernoulli random variables), and $R$ is a ``residual'' probability distribution over the sample space $\{0,1\}^5$. This decomposition of $\Prob$ is possible because for each outcome $(p,t,s,l,x)\in\{0,1\}^5$ a  computation shows that: 
\[\Prob(p,t,s,l,x)\ge0.94\,\cdot\,0.02^p\,0.98^{1-p}\times0.005^t\,0.995^{1-t}\times0.6^l\,0.4^{1-l}\times0.01^x\,0.99^{1-x}\times0.6^s\,0.4^{1-s}.\]
In particular, $R$ may be obtained solving for it in equation~(\ref{ide:TubPne}). It turns out in this case that $R$ has low entropy ($\approx3.2$ bits, compared to the uniform distribution over $\{0,1\}^5$, which has $5$ bits of entropy), and gives probability $0$ to twelve of the thirty-two outcomes.

The identity in equation~(\ref{ide:TubPne}) means that, conditioned on a hidden event of $94\%$ probability, the presence of lung infiltrates, the outcome of an X-ray and sputum smear, and the status of a patient having tuberculosis or pneumonia will all be rendered independent. Thus, while in a clinical setting the dependencies encoded in the Bayesian network may be relevant, on the population level, these covariates often behave independently. That is, despite the intricate dependencies encoded in the Bayesian network, most samples from this model can be attributed to a much simpler model (with $5$ instead of $10$ free parameters).

The decomposition in (\ref{ide:TubPne}) bears the question: \textit{what's the largest  weight a product of independent Bernoulli's can have as component of $\Prob$?} Remarkably, the marginal distributions of $\Prob$ are associated with a weight that is significantly smaller than 94\%. Indeed, a computation shows that $P\sim Be(0.05)$, $T\sim Be(0.02)$, $S\sim Be(0.6)$, $L\sim Be(0.05)$, and $X\sim Be(0.6)$, and that $P$ admits the mixture representation:
\[\Prob = \epsilon\cdot Be(0.05)\otimes  Be(0.02) \otimes Be(0.6)\otimes  Be(0.05) \otimes Be(0.6) + (1-\epsilon)\cdot R,\]
where $\epsilon\approx0.104$, and $R$ is a probability distribution that can be determined from the above identity.

\begin{figure}
\centering
\begin{tikzpicture}[node distance=1cm]
\node (L)[RV]{$L$};
\node (P) [RV, above of=L,xshift=-2cm,yshift=1cm]{$P$};
\node (T) [RV,above of=L,xshift=2cm,yshift=1cm]{$T$};
\node (X)[RV,below of=L,xshift=-2cm,yshift=-1cm]{$X$};
\node (S)[RV,below of=L,xshift=2cm,yshift=-1cm]{$S$};
\node (Pd)[dist,above of=P,yshift=0.5cm]{$\mathbb{P}(P=1)=0.05$};
\node (Td)[dist,above of=T,yshift=0.5cm]{$\Prob(T=1)=0.02$};
\node(Sd)[dist,below of=S,xshift=1cm,yshift=-1cm]{
\begin{tabular}{ c | c }
 $t$ & $\Prob(S=1\mid T=t)$\\
 \hline
 $0$ & $0.6$ \\  
 $1$ & $0.8$  
 \end{tabular}};
\node(Xd)[dist,below of=X,xshift=-1cm,yshift=-1cm]{
\begin{tabular}{ c | c }
 $l$ & $\Prob(X=1\mid L=l)$\\
 \hline
 $0$ & $0.6$ \\  
 $1$ & $0.8$  
 \end{tabular}};
\node(Ld)[dist,left of=L,xshift=-3.5cm]{
\begin{tabular}{ c |c| c }
 $p$ & $t$ & $\Prob(L=1\mid P=p,T=t)$\\
 \hline
 $0$ & $0$ & $0.01$ \\  
 $0$ & $1$ & $0.2$\\
 $1$ & $0$ & $0.6$\\
 $1$ & $1$ & $0.8$
 \end{tabular}};
\draw [arrow] (P)--(L);
\draw [arrow] (T)--(L);
\draw [arrow] (L)--(X);
\draw [arrow] (T)--(S);
\draw [desc] (P)--(Pd);
\draw [desc] (T)--(Td);
\draw [desc] (L)--(Ld);
\draw [desc] (X)--(Xd);
\draw [desc] (S)--(Sd);
\end{tikzpicture}
\caption{Bayesian network that models the interaction between two lung conditions, tuberculosis (T), and pneumonia (P), and how they jointly affect the probability that a patient will have lung infiltrates (L), the presence of said infiltrates in an X-ray (X), and the outcome of a sputum smear test (S) for tuberculosis. Nodes represent Bernoulli random variables, with conditional probability tables indicated, and the value $1$ ($0$) indicates the presence (absence) of the corresponding condition.}
\label{fig:BN96indep}
\end{figure}
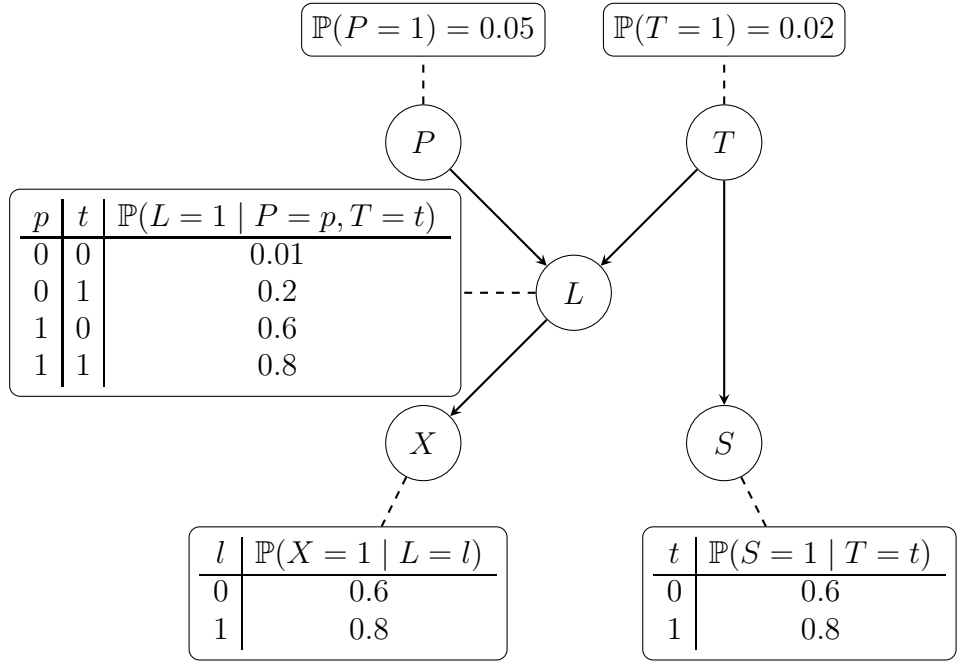

In this article we develop the mathematics of the so-called \emph{independent weight} of an arbitrary joint probability distribution over a sample space of the form $\{0,1\}^d$, with $d\ge1$ finite. We argue that the independent weight of a probabilistic source describes the largest average fraction of samples from the source that can be attributed to (conditionally) independent Bernoulli random variables, and describe an algorithm to compute this weight, along with some heuristics to approximate it. 

The independent weight of a probabilistic source is, therefore, an intrinsic property of it, which can be used as an objective measure of the truthiness of the null hypothesis that ``the source has independent marginals,'' which may be nevertheless false (as the example associated with Figure~\ref{fig:BN96indep}). The concept of independent weight may also be used to distill corrupted data from a source with otherwise independent marginals.

\subsection{Related Work}

The present work may be regarded as a non-trivial specialization of the recent theory developed in~\cite{PeaLla20}. This previous work introduces the concept of the \emph{latent weight} of a probabilistic source (such as $\Prob$ in the previous example) with respect to a structured class $\mathcal{Q}$ of probability models over a finite sample space. Specifically, the latent weight of a source $P$ with respect to a class $\mathcal Q$ of models is defined as~\cite{PeaLla20}:
\begin{align}
    \lambda_{\mathcal{Q}}(P):=\sup\{\lambda\geq0\mid P\geq\lambda\cdot Q\text{ for some }Q\in\mathcal Q\}.
\end{align}
This coefficient represents the largest weight that can be given to a model in  $\mathcal Q$ as a component in a mixture decomposition of $P$. In fact, under mild technical conditions, there always exists $Q\in\mathcal Q$ and a probabilistic model $R$ such that
\begin{align}
P=\lambda_{\mathcal Q}(P)\cdot Q+(1-\lambda_{\mathcal Q}(P))\cdot R.
\end{align}
Furthermore, when $\mathcal{Q}$ is convex, $Q$ is unique when $\lambda_{\mathcal Q}(P)>0$, and so is $R$ when $\lambda_{\mathcal Q}(P)<1$.


In the current setting, $\mathcal Q$ is the class of probability distributions associated with independent binary random variables. We emphasize that much of what we present in this extended abstract may be generalized to more general discrete random variables, however, the binary setting presents enough mathematical challenges to consider it in isolation.

\section{Latent Independent Weights}

In what follows, $\mathcal{P}$ denotes the set of all probability distributions on $\{0,1\}^d$, with $d\ge1$ a given integer. In particular, we may think of elements in $\mathcal{P}$ as a non-negative real vectors of dimension $2^d$, with entries that sum up to 1.

For $P,Q\in\mathcal{P}$ and $\lambda\in\mathbb{R}$, we write $P\ge\lambda\cdot Q$ to mean that $P(\omega)\geq \lambda\cdot Q(\omega)$, for each $\omega\in\{0,1\}^d$. Further, we say that $Q$ has \emph{independent marginals} if and only if there are probability distributions $\mu_1,\ldots,\mu_d$ defined over $\{0,1\}$ such that $Q=\otimes_{i=1}^d\mu_i$. Equivalently, $Q$ has independent marginals if and only if it is the probability distribution of a random vector of the form $(X_1,\ldots,X_d)$, with $X_1,\ldots,X_d$ independent (though not necessarily identically distributed) Bernoulli random variables. (In this case, each $X_i$ has distribution $\mu_i$.)

We associate to each $P\in\mathcal{P}$ the real coefficient: 
\begin{equation}
\label{eq:indWeight}
\lambda(P):= \sup\{\lambda\geq0\mid\exists\,Q\text{ with independent marginals such that }P\geq \lambda\cdot Q\}.
\end{equation}

Clearly, $0\le\lambda(P)\le1$. In fact, according  to~\cite{PeaLla20}, $\lambda(P)=1$ if and only if $P$ has independent marginals itself. Furthermore, because the subset of distributions in $\mathcal{P}$ with independent marginals is compact, the supremum in equation (\ref{eq:indWeight}) is always achieved~\cite{PeaLla20}. Namely, there is $Q\in\mathcal{P}$ with independent marginals such that $P\ge\lambda(P)\cdot Q$. As a result, since $(P-\lambda(P)\cdot Q)$ is a measure with total mass $(1-\lambda(P))$, there is also $R\in\mathcal P$ such that $P$ admits the mixture decomposition:
\begin{equation}
\label{eq:mix}
P=\lambda(P)\cdot Q+\big(1-\lambda(P)\big)\cdot R.
\end{equation}
This decomposition motivates calling $\lambda(P)$ the \emph{latent independent weight of} $P$, or simply the \emph{independent weight of} $P$. It follows that \emph{$\lambda(P)$ is the largest weight that can be attributed to a probability measure over $\{0,1\}^d$ with independent marginals as a component of $P$}. Equivalently: \emph{$\lambda(P)$ is  the maximal expected fraction of samples from $P$ which may be attributed to a probabilistic source with independent marginals.} More precisely, if $X=(X_1,\ldots,X_d)$ has distribution $P$ then, up to a hidden event with probability $\lambda(P)$, the Bernoulli random variables $X_1,\ldots,X_d$ are (conditionally) independent.

We note that the model $Q$ with independent marginals in equation~(\ref{eq:mix}) is not necessarily unique. For example, let $d=2$ and $P$ be the uniform distribution over $\{(0,0),(1,1)\}$; in particular,  $P=\delta_{(0,0)}/2+\delta_{(1,1)}/2$, where $\delta_{x}$ is the point probability mass at $x$. Careful analysis can verify that $\lambda(P)=1/2$, hence the supremum in equation~(\ref{eq:indWeight}) is achieved by $\delta_{(0,0)}$ as well as $\delta_{(1,1)}$, both of which have independent marginals.

\subsection{Alternative Formulations}

In this section we show how to compute latent independents weights.

Henceforth, $P\in\mathcal{P}$ is assumed fixed. Moreover, we assume that $P>0$, i.e. $P(\nu)>0$ for each $\nu\in\{0,1\}^d$. This assumption can be relaxed but goes beyond the scope of this extended abstract.

For $\omega=(\omega_1,\ldots,\omega_d)\in\{0,1\}^d$ and $q=(q_1,\ldots,q_d)\in[0,1]^d$ define
\begin{align*}
f_\omega (q) :=\prod_{i=1}^d q_i^{-\omega_i}(1-q_i)^{\omega_i-1}.
\end{align*}

\begin{lemma}
\label{lem:cont}
For each $\omega\in\{0,1\}^d$, the function $f_\omega:[0,1]^d\to\mathbb{R}\cup\{+\infty\}$ is continuous.
\end{lemma}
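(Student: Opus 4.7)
My plan is to exploit the fact that each coordinate $\omega_i$ is binary, which forces the $i$-th factor of the product defining $f_\omega$ to collapse to one of two very simple one-variable functions whose continuity into $\mathbb{R}\cup\{+\infty\}$ is elementary, and then invoke the fact that the product of finitely many such factors is jointly continuous.

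Concretely, I would first write
\[
g_i(q_i) \;:=\; q_i^{-\omega_i}(1-q_i)^{\omega_i - 1} \;=\;
\begin{cases} 1/(1-q_i), & \omega_i = 0,\\ 1/q_i, & \omega_i = 1, \end{cases}
\]
so that $f_\omega(q) = \prod_{i=1}^d g_i(q_i)$. Each $g_i$ takes values in $[1,+\infty]$: it is finite and continuous on the open half of $[0,1]$ away from its singularity, and tends monotonically to $+\infty$ as $q_i$ approaches the problematic endpoint ($1$ if $\omega_i=0$, $0$ if $\omega_i=1$). Adopting the standard topology on $\mathbb{R}\cup\{+\infty\}$ (for which a neighborhood base at $+\infty$ consists of intervals $(M,+\infty]$), this monotone divergence is exactly the statement that $g_i:[0,1]\to[1,+\infty]$ is continuous.

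Next I would verify that multiplication $[1,+\infty]^d\to[1,+\infty]$, sending $(x_1,\dots,x_d)\mapsto\prod_i x_i$, is continuous in the product topology. Because every input lies in $[1,+\infty]$, no $0\cdot\infty$ indeterminacies arise: if any coordinate $x_i$ is $+\infty$ the product is $+\infty$, and in any neighborhood of such a point one coordinate is eventually arbitrarily large while the others remain bounded below by $1$, so the product is eventually arbitrarily large; when all $x_i$ are finite, continuity is just continuity of ordinary multiplication. Composing this with the continuous map $q\mapsto(g_1(q_1),\dots,g_d(q_d))$ yields the continuity of $f_\omega$.

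The only subtle point, and the step I would be most careful with, is the topological claim about multiplication on $[1,+\infty]^d$; everything else is routine. Rather than treating it as a separate lemma, I would probably absorb it into an $\varepsilon$--$M$ argument directly for $f_\omega$: given $q^{(0)}\in[0,1]^d$, split the coordinates into those where $g_i(q_i^{(0)})$ is finite and those where it is $+\infty$; on a small neighborhood the finite-coordinate factors stay within a bounded factor of their values at $q^{(0)}$, while each infinite-coordinate factor can be made larger than any prescribed $M$, making the whole product exceed $M$ on that neighborhood (or stay within $\varepsilon$ of the finite limit when no infinite coordinates are present). This handles both the finite and $+\infty$ cases of continuity uniformly.
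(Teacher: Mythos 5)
Your proof is correct, but it takes a different route from the paper's. The paper observes that $f_\omega(q)=1/\Prob(X=\omega\mid q)$, where $\Prob(X=\omega\mid q)=\prod_{i=1}^d q_i^{\omega_i}(1-q_i)^{1-\omega_i}$ is a polynomial in $q$, hence jointly continuous with values in $[0,1]$; continuity of $f_\omega$ then follows from the single one-variable fact that $t\mapsto 1/t$ is continuous from $[0,1]$ into $[1,+\infty]$ (with $1/0=+\infty$). You instead take reciprocals factor by factor, obtaining $f_\omega=\prod_i g_i(q_i)$ with each $g_i:[0,1]\to[1,+\infty]$ continuous, and then must justify that $d$-fold multiplication is continuous on $[1,+\infty]^d$. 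You correctly identify this as the one nontrivial topological step and correctly note that restricting to $[1,+\infty]$ kills the $0\cdot\infty$ indeterminacy, and your fallback $\varepsilon$--$M$ argument is sound. The trade-off: the paper's reciprocal-of-a-polynomial trick localizes all extended-arithmetic care to a single scalar inversion and also supplies a probabilistic interpretation of $f_\omega$ that is reused implicitly throughout the paper (e.g., in Lemma~\ref{lem:formula}, where $P(\nu)f_\nu(q)$ is a likelihood ratio); your factorwise version is more self-contained and generalizes more transparently to the non-binary setting the authors allude to, at the cost of having to verify continuity of multiplication on the extended half-line.
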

\begin{proof}
Fix an $\omega$. Observe that if $X=(X_1,\ldots,X_d)$ is a random vector with independent entries such that $X_i\sim Bernoulli(q_i)$, and $q=(q_1,\ldots,q_d)$, then \[\Prob(X=\omega\,|\,q)=\prod_{i=1}^d\Prob(X_i=\omega_i\,|\,q)=\frac{1}{f_\omega(q)}\]
i.e.
\[f_\omega(q)=\frac{1}{\Prob(X=\omega\,|\,q)}.\]
In particular, since $\Prob(X=\omega\mid q)$ is a continuous function of the parameter $q$, the lemma follows.
\end{proof}

In what follows, for each $w\in\{0,1\}^d$, we define
\begin{equation}
\label{def:Qomega}
\mathcal{Q}_\omega:=\left\{q\in[0,1]^d\,|\,\forall\nu\in\{0,1\}^d:\,P(\omega) f_\omega(q)\leq P(\nu) f_\nu (q)\right\}.
\end{equation}


\begin{lemma}
\label{lem:formula}
If $P>0$ then $\lambda(P)=\max\limits_{\omega \in\{0,1\}^d}\,\max\limits_{q\in \mathcal{Q}_\omega }P(\omega) f_\omega (q)$.
\end{lemma}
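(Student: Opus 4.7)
The strategy is to reparameterize distributions with independent marginals by their Bernoulli parameter vector $q \in [0,1]^d$, reduce the definition of $\lambda(P)$ to a max--min over $q$, and then localize the inner minimum using the cover $\{\mathcal{Q}_\omega\}_\omega$ of $[0,1]^d$.

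First I would note that any $Q$ with independent marginals has the form $Q = \otimes_{i=1}^d Be(q_i)$ for some $q \in [0,1]^d$, and by Lemma~\ref{lem:cont} we have $Q(\omega) = 1/f_\omega(q)$ (with the convention $1/\infty = 0$). Hence the pointwise inequality $P \geq \lambda \cdot Q$ is equivalent to $\lambda \leq P(\omega)\,f_\omega(q)$ for every $\omega \in \{0,1\}^d$, so the best $\lambda$ compatible with a fixed $q$ is $\min_\omega P(\omega)\,f_\omega(q)$. Therefore
\[
\lambda(P) \;=\; \sup_{q \in [0,1]^d}\; \min_{\omega \in \{0,1\}^d} P(\omega)\,f_\omega(q).
\]

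Second, I would observe that the sets $\mathcal{Q}_\omega$ are precisely the level sets of the argmin: $q \in \mathcal{Q}_\omega$ if and only if $\omega$ attains $\min_\nu P(\nu)\,f_\nu(q)$. Consequently $[0,1]^d = \bigcup_\omega \mathcal{Q}_\omega$, and on $\mathcal{Q}_\omega$ the inner minimum equals $P(\omega)\,f_\omega(q)$. Splitting the outer $\sup$ along this cover gives
\[
\lambda(P) \;=\; \max_{\omega \in \{0,1\}^d}\; \sup_{q \in \mathcal{Q}_\omega} P(\omega)\,f_\omega(q).
\]

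Finally, I would upgrade the inner $\sup$ to a $\max$ by a compactness argument. Lemma~\ref{lem:cont} yields continuity of each $f_\nu$ as an extended-real-valued function, so each inequality in the definition of $\mathcal{Q}_\omega$ cuts out a closed subset of $[0,1]^d$, making $\mathcal{Q}_\omega$ compact. Furthermore $P(\omega)\,f_\omega$ is bounded on $\mathcal{Q}_\omega$: the normalization $\sum_\nu 1/f_\nu(q) = \sum_\nu Q(\nu) = 1$ forces $\min_\nu f_\nu(q) \leq 2^d$, and on $\mathcal{Q}_\omega$ this minimum bound transfers to $P(\omega)\,f_\omega(q)$ up to the ratio of $P$-values. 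Weierstrass then supplies a maximizer.

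The delicate point I would expect to be the main obstacle is handling the boundary of $[0,1]^d$, where some $f_\omega$ may be $+\infty$. The key is the self-consistency observation above: if $P(\omega)\,f_\omega(q) = +\infty$ for some $q \in \mathcal{Q}_\omega$, then $f_\nu(q) = +\infty$ for every $\nu$, contradicting $\sum_\nu 1/f_\nu(q)=1$. So $P(\omega)\,f_\omega$ is automatically finite on $\mathcal{Q}_\omega$, and boundary configurations (e.g.\ $q = (0,\ldots,0) \in \mathcal{Q}_{(0,\ldots,0)}$) fit into the argument without special treatment.
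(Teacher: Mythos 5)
Your proposal is correct and follows essentially the same route as the paper's proof: reparameterize independent-marginal measures by $q\in[0,1]^d$, rewrite $\lambda(P)$ as $\sup_q\min_\nu P(\nu)f_\nu(q)$ (using $P>0$ to rule out $0\cdot\infty$), cover $[0,1]^d$ by the argmin sets $\mathcal{Q}_\omega$, and use the defining inequality of $\mathcal{Q}_\omega$ to replace the inner minimum by $P(\omega)f_\omega(q)$. The only cosmetic difference is where compactness is invoked---the paper upgrades $\sup$ to $\max$ once over all of $[0,1]^d$ via Lemma~\ref{lem:cont}, while you do it on each $\mathcal{Q}_\omega$ separately, which makes you slightly more explicit about closedness of $\mathcal{Q}_\omega$ and finiteness of $f_\omega$ there, but the substance is identical.
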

\begin{proof}
Since a probability measure over $\{0,1\}^d$ with independent marginals may be represented in terms of $d$ independent Bernoulli random variables, we may restate equation~(\ref{eq:indWeight}) equivalently as follows:
\begin{align*}
\lambda(P) =& \sup\left\{\lambda\ge0\mid\exists q\in[0,1]^d\,\forall\nu\in\{0,1\}^d: P(\nu)\geq \lambda\cdot\prod_{i=1}^d q_i^{\nu_i}(1-q_i)^{1-\nu_i}\right\}\\
=& \sup\left\{\lambda\geq0\mid\exists q\in[0,1]^d: \,\lambda\leq\min_{\nu\in\{0,1\}^d} P(\nu)f_\nu(q)\right\}\\
=&\sup_{q\in[0,1]^d}\,\min_{\nu\in\{0,1\}^d} P(\nu)f_\nu(q)\\
=&\max_{q\in[0,1]^d}\,\min_{\nu\in\{0,1\}^d} P(\nu)f_\nu(q),
\end{align*}
where for the middle identity we have used that $P>0$, which prevents the possibility of dealing with anomalous products of the form $0\cdot(+\infty)$, and for the last identity we have used Lemma~\ref{lem:cont}, and that $[0,1]^d$ is compact.

But observe that for each $q\in[0,1]^d$ there must exists an $\omega$ which minimizes (possibly with ties) the quantity $P(\nu)f_\nu(q)$, with $\nu\in\{0,1\}^d$; in particular, $[0,1]^d\subset\cup_{\omega\in\{0,1\}^d}\mathcal{Q}_\omega$. In particular, since $\mathcal{Q}_\omega\subset[0,1]^d$ for each $\omega$, we obtain that \[[0,1]^d=\bigcup_{\omega\in\{0,1\}^d}\mathcal{Q}_\omega.\]
Consequently, from the last identity for $\lambda(P)$, we finally obtain that
\[\lambda(P)=\max_{\omega\in\{0,1\}^d}\,\max_{q\in\mathcal{Q}_\omega}\,\min_{\nu\in\{0,1\}^d} P(\nu)f_\nu(q)=\max_{\omega\in\{0,1\}^d}\,\max_{q\in\mathcal{Q}_\omega} P(\omega)f_\omega(q),\]
where for the last identity we have used the defining property of the set $\mathcal{Q}_\omega$.
\end{proof}

Lemma~\ref{lem:formula} reduces the calculation of $\lambda(P)$ to $2^d$ optimization problems of the form:
\begin{equation}
\label{ide:optpro}
\max_{q\in\mathcal{Q}_\omega}P(\omega)f_\omega(q),\hbox{ for each }\omega\in\{0,1\}^d.
\end{equation} 
Our next result shows how to make each  of these problems more explicit.

\begin{lemma}
\label{lem:varx}
Assume $P>0$. For a given $\omega\in\{0,1\}^d$, the transformation $q\longrightarrow x$ with $x=(x_1,\ldots,x_d)$ and $x_i:=(\frac{q_i}{1-q_i})^{1-2\omega _i}$, is a bijection between $(0,1)^d$ and $\mathbb{R}_+^d$, and in terms of the variable $x$:
\begin{equation}
f_\omega(q)=\prod_{i=1}^d (1+x_i).
\label{ide:fomegaq}
\end{equation}
In particular, for $q\in(0,1)^d$:
\begin{align}
q\in\mathcal{Q}_\omega\hbox{ if and only if }\forall\nu\in\{0,1\}^d:\,\prod_{i:\,\nu_i\neq\omega _i}x_i\leq \frac{P(\nu)}{P(\omega)},
\label{eq:nonlinconst}
\end{align}
where $\prod_{i:\,\nu_i\neq\omega _i}x_i:=1$ when  $\nu=\omega$.
\end{lemma}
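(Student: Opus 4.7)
The plan is to verify the three claims (bijection, formula for $f_\omega$, and characterization of $\mathcal{Q}_\omega$) separately, observing that all quantities factor coordinatewise and so the proof reduces to a case analysis on the two possible values of $\omega_i\in\{0,1\}$.

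First I would handle the bijection. Because $1-2\omega_i\in\{+1,-1\}$, the map $q_i\mapsto x_i$ is either $q_i\mapsto q_i/(1-q_i)$ (when $\omega_i=0$) or $q_i\mapsto (1-q_i)/q_i$ (when $\omega_i=1$). In either case it is a strictly monotone continuous bijection $(0,1)\to(0,\infty)$, whose inverse is explicitly $q_i=x_i/(1+x_i)$ or $q_i=1/(1+x_i)$ respectively. The product map is therefore a bijection $(0,1)^d\to\mathbb{R}_+^d$.

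Next I would derive identity~(\ref{ide:fomegaq}) factor by factor. The $i$-th factor of $f_\omega(q)$ is $q_i^{-\omega_i}(1-q_i)^{\omega_i-1}$, which equals $1/(1-q_i)$ if $\omega_i=0$ and $1/q_i$ if $\omega_i=1$. A direct substitution shows that in both cases this equals $1+x_i$, so $f_\omega(q)=\prod_i(1+x_i)$.

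The last step, and the only part requiring a small trick, is the reformulation of $\mathcal{Q}_\omega$. I would rewrite the defining inequality $P(\omega)f_\omega(q)\le P(\nu)f_\nu(q)$ as
\[
\frac{f_\nu(q)}{f_\omega(q)}\;\ge\;\frac{P(\omega)}{P(\nu)},
\]
and compute the ratio explicitly:
\[
\frac{f_\nu(q)}{f_\omega(q)}\;=\;\prod_{i=1}^d q_i^{\omega_i-\nu_i}(1-q_i)^{\nu_i-\omega_i}\;=\;\prod_{i=1}^d\left(\frac{1-q_i}{q_i}\right)^{\nu_i-\omega_i}.
\]
The point is that the $i$-th factor is $1$ whenever $\nu_i=\omega_i$, and when $\nu_i\neq\omega_i$ the exponent $\nu_i-\omega_i$ flips sign with $\omega_i$, which is exactly compensated by the sign $1-2\omega_i$ built into the definition of $x_i$; hence in both sub-cases ($\omega_i=0,\nu_i=1$ or $\omega_i=1,\nu_i=0$) the factor equals $1/x_i$. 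Thus $f_\nu(q)/f_\omega(q)=\prod_{i:\,\nu_i\neq\omega_i}x_i^{-1}$, and rearranging gives the claim~(\ref{eq:nonlinconst}); the convention that an empty product equals $1$ correctly handles the case $\nu=\omega$, which produces the trivial inequality $1\le 1$. Throughout this step one uses $q\in(0,1)^d$ (so that all $x_i\in(0,\infty)$) and $P>0$ (so that $P(\nu)/P(\omega)$ is well defined and positive), which are the only nontrivial hypotheses in play.
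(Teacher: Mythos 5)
Your proof is correct and follows essentially the same route as the paper's: a coordinatewise case analysis on $\omega_i$ for the bijection and for identity~(\ref{ide:fomegaq}), and a computation of the ratio of $f_\nu$ to $f_\omega$ to turn the defining inequalities of $\mathcal{Q}_\omega$ into~(\ref{eq:nonlinconst}). The only cosmetic difference is that you compute $f_\nu(q)/f_\omega(q)=\prod_{i:\,\nu_i\neq\omega_i}x_i^{-1}$ where the paper computes the reciprocal $f_\omega(q)/f_\nu(q)=\prod_{i:\,\nu_i\neq\omega_i}x_i$ directly; the content is identical.
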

\begin{proof}
If $\omega_i=1$ then $x_i = \frac{1-q_i}{q_i}$, which is a strictly increasing function of $q_i$. Instead, if $\omega_i=0$ then $x_i = \frac{q_i}{1-q_i}$, which is a strictly decreasing function of $q_i$. Thus, in either case, $x_i$ is a strictly monotone function of $q_i$, with range $(0,+\infty)$ when $q_i\in(0,1)$. From this it is immediate that the transformation $q\rightarrow x$ from $(0,1)^d$ to $\mathbb{R}_+^d$ is one-to-one and onto.

On the other hand, if $\omega_i=1$ then  $q_i=\frac{1}{1+x_i}$, hence
\[q_i^{-\omega_i}(1-q_i)^{\omega_i-1}=\frac{1}{q_i}=1+x_i.\]
Likewise, if $\omega_i=0$ then $q_i=\frac{x_i}{1+x_i}$ i.e. $(1-q_i)=\frac{1}{1+x_i}$, hence
\[q_i^{-\omega_i}(1-q_i)^{\omega_i-1}=\frac{1}{1-q_i}=1+x_i.\]
In either case, $q_i^{-\omega_i}(1-q_i)^{\omega_i-1}=(1+x_i)$, which implies the identity in equation (\ref{ide:fomegaq}).

Because $P>0$, observe for $q\in(0,1)^d$ that:
\begin{align}
\label{ide:fratio}
q\in\mathcal{Q}_\omega\hbox{ if and only if }\forall\nu\in\{0,1\}^d:\,\frac{f_\omega(q)}{f_\nu(q)}\leq \frac{P(\nu)}{P(\omega)}.
\end{align}
But, in terms of the original variable $q$:
\[\frac{f_\omega(q)}{f_\nu(q)} = \prod_{i=1}^d q_i^{\nu_i-\omega_i}(1-q_i)^{\omega_i-\nu_i}. \]
Note however that if $\omega_i=\nu_i$ then $q_i^{\nu_i-\omega_i}(1-q_i)^{\omega_i-\nu_i}=1$. If instead $\omega_i\ne\nu_i$, there are only two possibilities. On the one hand, if $\omega_i=0$ and $\nu_i=1$, then
\[q_i^{\nu_i-\omega_i}(1-q_i)^{\omega_i-\nu_i} = \frac{q_i}{1-q_i}=\left(\frac{q_i}{1-q_i}\right)^{1-2\omega_i}=x_i.\]
On the other hand, if $\omega_i=1$ and $\nu_i=0$, then 
\[q_i^{\nu_i-\omega_i}(1-q_i)^{\omega_i-\nu_i} = \frac{1-q_i}{q_i}=\left(\frac{q_i}{1-q_i}\right)^{1-2\omega_i}=x_i.\]
As a result:
\[\frac{f_\omega(q)}{f_\nu(q)} = \prod_{i:\nu_i\neq\omega_i}x_i,\]
which together with equation~(\ref{ide:fratio}) implies the lemma.
\end{proof}

Using the variable $x$ in $\mathbb{R}_+^d$ instead of $q$ in $(0,1)^d$ has two advantages in terms of the optimization problems in equation~(\ref{ide:optpro}). First, up to the factor $P(\omega)$, the objective function does not depend on $\omega$. Second, the objective function is monotonically increasing in each coordinate of $x$; in particular, any maximum must lie on the boundary of the feasible region, i.e. at least one of the inequalities in equation~(\ref{eq:nonlinconst})  must be an equality. Nevertheless, the special nature of the constraints, suggests introducing the additional change of variables $x\to y$, with $y=(y_1,\ldots,y_d)$ and $y_i:=\ln(x_i)$, which is clearly a bijection between $\mathbb{R}_+^d$ and $\mathbb{R}^d$. The following result is now immediate from the previous lemma.


\begin{corollary}
\label{cor:vary}
For a given $\omega\in\{0,1\}^d$, the transformation $q\longrightarrow y$ with $y=(y_1,\ldots,y_d)$ and $y_i:=(1-2\omega_i)\cdot\ln\left(\frac{q_i}{1-q_i}\right)$, is a bijection between $(0,1)^d$ and $\mathbb{R}^d$, and in terms of the variable $y$:
\[f_\omega(q)=\prod_{i=1}^d (1+e^{y_i}).\]
In particular, for $q\in(0,1)^d$:
\begin{align}
q\in\mathcal{Q}_\omega\hbox{ if and only if }\forall\nu\in\{0,1\}^d:\,\sum_{i:\,\nu_i\neq\omega _i}y_i\leq \ln\left(\frac{P(\nu)}{P(\omega)}\right),
\label{eq:linconst}
\end{align}
where $\sum_{i:\,\nu_i\neq\omega _i}x_i:=0$ when $\nu=\omega$.
\end{corollary}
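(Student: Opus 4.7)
The plan is to derive the corollary as an immediate consequence of Lemma~\ref{lem:varx} by composing the bijection $q\to x$ given there with the coordinatewise logarithm. Setting $y_i := \ln(x_i)$ where $x_i = (q_i/(1-q_i))^{1-2\omega_i}$ recovers the stated formula $y_i = (1-2\omega_i)\ln(q_i/(1-q_i))$, since $\ln$ pulls the exponent $(1-2\omega_i)$ outside as a factor. Because $\ln:\mathbb{R}_+\to\mathbb{R}$ is a bijection and Lemma~\ref{lem:varx} provides the bijection $q\to x$ from $(0,1)^d$ onto $\mathbb{R}_+^d$, the composition is a bijection $(0,1)^d\to\mathbb{R}^d$.

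For the product formula, I would simply substitute $x_i = e^{y_i}$ into the expression $f_\omega(q) = \prod_{i=1}^d(1+x_i)$ from equation~(\ref{ide:fomegaq}) to obtain $f_\omega(q) = \prod_{i=1}^d(1+e^{y_i})$ without further computation.

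For the reformulation of $\mathcal{Q}_\omega$, I would apply the strictly increasing function $\ln$ to both sides of each multiplicative constraint in equation~(\ref{eq:nonlinconst}). Since $\ln(\prod_{i:\nu_i\neq\omega_i} x_i) = \sum_{i:\nu_i\neq\omega_i} \ln(x_i) = \sum_{i:\nu_i\neq\omega_i} y_i$, the inequality $\prod_{i:\nu_i\neq\omega_i} x_i \le P(\nu)/P(\omega)$ becomes $\sum_{i:\nu_i\neq\omega_i} y_i \le \ln(P(\nu)/P(\omega))$. The corner case $\nu=\omega$ is handled consistently: the empty product convention $\prod := 1$ in Lemma~\ref{lem:varx} matches the empty sum convention $\sum := 0$ here via $\ln 1 = 0$, and the resulting inequality $0\le 0$ is automatically satisfied.

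There is no real obstacle to this argument — the corollary is a cosmetic repackaging in which the substantive analysis already performed in Lemma~\ref{lem:varx} is transported through the logarithm. Its purpose, presumably, is to present the feasible region of each optimization problem in equation~(\ref{ide:optpro}) as a polyhedron defined by $2^d$ linear inequalities in $y\in\mathbb{R}^d$, which is a far more tractable object than the multiplicative constraints in $x$ for the algorithmic developments that the paper is building toward.
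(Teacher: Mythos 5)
Your argument is correct and is exactly the paper's route: the authors state the corollary as ``immediate from the previous lemma'' via the change of variables $y_i:=\ln(x_i)$, which is precisely the composition with the coordinatewise logarithm that you carry out, including the transport of the multiplicative constraints in equation~(\ref{eq:nonlinconst}) to the linear ones in equation~(\ref{eq:linconst}). Your handling of the $\nu=\omega$ corner case and your remark on the polyhedral payoff match the paper's intent; nothing is missing.
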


Using the variable $y$ instead of $q$ retains all the good properties we already had with the variable $x$, particularly, the objective function remains monotonically increasing in each coordinate, however, it also transforms the feasible region into a polyhedron~\cite[Chapter 8]{polyhedra}, which is a well-studied geometric object. We show how to exploit this geometry in the next section.

\subsection{Geometric Insights}

In this section, we fix an outcome $\omega\in\{0,1\}^d$ and describe a combinatorial algorithm to solve the associated optimization problem in equation~(\ref{ide:optpro}).

In what follows, all vectors are represented as column vectors. As seen in equation~(\ref{eq:linconst}), in terms of the variable $y$, the feasible set $\mathcal Q_\omega$ can be formulated as a linear inequality system in standard form. The following result is now immediate from Corollary~\ref{cor:vary}.

\begin{corollary}
\label{cor:linIneqSys}
Assume that $P>0$. For a given $\omega\in\{0,1\}^d$, let $A_\omega$ be the binary matrix of dimensions $(2^d-1)\times d$ with entries $A_\omega(\nu,i):=\llbracket\nu_i\neq \omega_i\rrbracket$, for each $\nu\in\{0,1\}^d\setminus\{\omega\}$ and $i\in\{1,\ldots,d\}$. Furthermore, let $b_\omega$ be a column vector of dimension $(2^d-1)$ with entries $b_\omega(\nu):=\log(P(\nu)/P(\omega))$, for each $\nu\in\{0,1\}^d\setminus\{\omega\}$. Then, in terms of the variable $y$, $\mathcal Q_\omega$ corresponds to the set of $y\in\mathbb R^d$ satisfying the coordinatewise inequalities:
\begin{align}
\label{eq:linIneqSys}
A_\omega y\leq b_\omega.
\end{align}
\end{corollary}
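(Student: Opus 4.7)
The plan is to directly translate the coordinatewise characterization of $\mathcal Q_\omega$ given in Corollary~\ref{cor:vary} into the matrix-vector form $A_\omega y \le b_\omega$. The first step is to recognize that for each fixed $\nu \in \{0,1\}^d$, the sum $\sum_{i:\,\nu_i\ne\omega_i} y_i$ in equation~(\ref{eq:linconst}) can be rewritten as $\sum_{i=1}^d \llbracket\nu_i\ne\omega_i\rrbracket \cdot y_i$, since the Iverson bracket is exactly the $0/1$-coefficient that selects the indices where $\nu$ and $\omega$ disagree. This sum is precisely the inner product of the row of $A_\omega$ indexed by $\nu$ with the column vector $y$.

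Next, I would stack these scalar inequalities, one per $\nu$, into a single coordinatewise vector inequality. The right-hand side $\ln(P(\nu)/P(\omega))$ is finite for every $\nu$ thanks to the standing assumption $P>0$, so the vector $b_\omega$ is well defined in $\mathbb R^{2^d-1}$.

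Finally, I would argue that restricting to $\nu \in \{0,1\}^d \setminus \{\omega\}$ loses no information. Indeed, when $\nu = \omega$ the set $\{i:\nu_i\ne\omega_i\}$ is empty, so by the convention in Corollary~\ref{cor:vary} the left-hand side vanishes; the corresponding right-hand side is $\ln(P(\omega)/P(\omega)) = 0$, so the $\nu=\omega$ constraint is the trivial tautology $0\le 0$. Discarding this redundant row yields exactly $(2^d-1)$ rows, matching the stated dimensions of $A_\omega$ and $b_\omega$.

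Since the argument is essentially a change of notation from scalar sums to a linear system, I do not anticipate a substantive obstacle; the only point that requires care is the bookkeeping at $\nu=\omega$ and the use of $P>0$ to ensure $b_\omega$ has all real (as opposed to $-\infty$) entries. Both of these are handled in the previous paragraph, so the corollary follows immediately from Corollary~\ref{cor:vary}.
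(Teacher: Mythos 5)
Your proposal is correct and matches the paper's intent exactly: the paper gives no explicit proof, stating only that the result is ``immediate from Corollary~\ref{cor:vary},'' and your translation of the sums $\sum_{i:\,\nu_i\neq\omega_i}y_i$ into rows of $A_\omega$ via the Iverson bracket, together with the observations that $P>0$ keeps $b_\omega$ finite and that the $\nu=\omega$ row is the tautology $0\le 0$, is precisely the bookkeeping being elided.
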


The above inequality characterizes $\mathcal{Q}_\omega$ (in terms of the variable $y$) as a non-empty convex polyhedron in $\mathbb R^d$. Recall, $y\in\mathcal{Q}_\omega$ is called a \emph{vertex} if there exists an invertible sub-matrix $A_\omega'$ of $A_\omega$ of dimensions $d\times d$ and a corresponding sub-vector $b_\omega'$ of $b_\omega$ of dimension $d$ such that $A_\omega'y=b_\omega'$~\cite[Chapter 8, equation (23)]{polyhedra}. (The sub-matrix $A_\omega'$ and the sub-vector $b_\omega'$ are associated with the same rows of $A_\omega$ and $b_\omega$, respectively.)

\begin{lemma}
The polyhedron in equation~(\ref{eq:linIneqSys}) is pointed, i.e. it contains at least one vertex.
\end{lemma}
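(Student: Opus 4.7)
The plan is to invoke a classical polyhedral criterion: a non-empty polyhedron of the form $\{y\in\mathbb{R}^d:Ay\le b\}$ is pointed if and only if its constraint matrix $A$ has full column rank (equivalently, its lineality space $\{v:Av=0\}$ is trivial). Accordingly, I would split the claim into two sub-claims: (i) $\mathcal{Q}_\omega$ is non-empty in $\mathbb{R}^d$, and (ii) $A_\omega$ has rank $d$.

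For (i), I would appeal to continuity and the blow-up of $f_\nu$ near the boundary of $[0,1]^d$. Fix $\omega$ and take any sequence $q^{(n)}\in(0,1)^d$ with $q^{(n)}\to\omega$ coordinatewise. A direct inspection of the factors shows that $f_\omega(q^{(n)})\to 1$, whereas for each $\nu\neq\omega$ any index $i$ with $\nu_i\neq\omega_i$ produces a factor $q_i^{-\nu_i}(1-q_i)^{\nu_i-1}$ that diverges, so $f_\nu(q^{(n)})\to+\infty$. Since $P>0$, the defining inequalities $P(\omega)f_\omega(q)\le P(\nu)f_\nu(q)$ are eventually satisfied for every $\nu$, so a tail of the sequence lies in $\mathcal{Q}_\omega$. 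Transporting through the bijection of Corollary~\ref{cor:vary} yields a point of the polyhedron in $\mathbb{R}^d$.

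For (ii), I would exploit the explicit description of $A_\omega$ given by Corollary~\ref{cor:linIneqSys}. For each $i\in\{1,\ldots,d\}$, consider the row indexed by the outcome $\nu^{(i)}$ obtained from $\omega$ by flipping only coordinate $i$. By definition, $A_\omega(\nu^{(i)},j)=\llbracket\nu^{(i)}_j\neq\omega_j\rrbracket=\llbracket j=i\rrbracket$, so this row equals the standard basis vector $e_i^\top$. Collecting all $d$ such rows exhibits an identity block $I_d$ sitting inside $A_\omega$; in particular, $\mathrm{rank}(A_\omega)=d$.

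Combining (i) and (ii) with the classical criterion closes the argument. There is no real obstacle here, but one small pitfall worth noting: the candidate point obtained by solving the identity sub-system $I_d\,y=b'_\omega$, namely $y_i=\log(P(\nu^{(i)})/P(\omega))$, need not satisfy the remaining $2^d-1-d$ constraints, so one cannot simply exhibit it as a vertex. What the rank argument delivers is that the recession cone of $\mathcal{Q}_\omega$ is pointed, which together with non-emptiness forces \emph{some} vertex to exist — precisely the conclusion required.
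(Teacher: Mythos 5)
Your argument is correct and follows essentially the same route as the paper: both proofs exhibit the $d$ rows of $A_\omega$ indexed by the single-coordinate flips of $\omega$, observe that they form the identity matrix, and conclude that the lineality space is trivial, hence the polyhedron is pointed. Your additional verification of non-emptiness (via $q^{(n)}\to\omega$, where $f_\omega\to 1$ while $f_\nu\to+\infty$ for $\nu\neq\omega$) is a welcome extra step that the paper only asserts in the surrounding text, and your closing caveat that the identity subsystem's solution need not itself be feasible is well taken.
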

\begin{proof}
For each $i\in\{1,\ldots,d\}$, let $\nu_i\in\{0,1\}^d$ be such that $\nu_i(j)=\omega(j)$ for $j\ne i$, and $\nu_i(i)=1-\omega(i)$. Then the sub-matrix of $A_\omega$ associated with rows in the set $\{\nu_1,\ldots,\nu_d\}$ corresponds to the $(d\times d)$ identity matrix. As a result, the kernel of $A_\omega$---which coincides exactly with the so-called ``lineality space'' of the polyhedron---is $\{0\}$, which implies that the polyhedron is pointed~\cite[Chapter 8, equations (6) and (23)]{polyhedra}.
\end{proof}

In the language of polyhedral programming, a vertex is a zero-dimensional face. More generally, if $c\in\mathbb R^d\setminus\{0\}$, $\delta\in\mathbb R$, and $G:=\{y\in\mathbb R^d \mid c^t y=\delta\}$ we say the affine hyperplane $G$ is a \emph{supporting hyperplane} of $\mathcal Q_\omega$ at the point $y\in\mathcal Q_\omega$ if $y\in G\cap \mathcal Q_\omega$ and $\mathcal Q_\omega$ is contained in one of the closed half-spaces bounded by $G$~\cite[p.~20]{multiobjectiveLP}. The non-empty set $F:=G\cap\mathcal Q_\omega$ is called a \emph{face} of $\mathcal Q_\omega$.  
Equivalently, a face of $\mathcal{Q}_\omega$ is any set of the form $\{y\in\mathcal Q_\omega\mid A'_\omega y=b'_\omega\}$, where $A'_\omega$ and $b'_\omega$ are a sub-matrix and sub-vector associated with the same rows of $A_\omega$ and $b_\omega$, respectively~\cite[Theorem 2.3.3]{multiobjectiveLP}. (Here, $A_\omega'$ does not need to be a square matrix.) The dimension of a face $F$ associated with the subsystem $A'_\omega y = b'_\omega$ is $d-\text{rank}(A'_\omega)$.

\begin{corollary}
\label{cor:verts}
If $y\in\partial\mathcal Q_\omega$, the boundary of $Q_\omega$, and $y$ is not a vertex of $\mathcal Q_\omega$, then $y$ lies in the relative interior of \emph{some} positive-dimensional face of $Q_\omega$. That is, there is a positive-dimensional face $F$ and some $\epsilon>0$ such that the intersection of the closed $\epsilon$-ball around $y$ and the affine hull of $F$ is contained in $F$.
\end{corollary}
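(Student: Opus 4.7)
The plan is to produce the face $F$ explicitly as the one cut out by the constraints that are active at $y$, then verify via a continuity argument that $y$ sits in its relative interior.

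First I would let $I:=\{\nu\in\{0,1\}^d\setminus\{\omega\}\mid A_\omega(\nu,\cdot)\,y=b_\omega(\nu)\}$ denote the index set of constraints active at $y$. Because $y\in\partial\mathcal{Q}_\omega$, the set $I$ is nonempty. Let $A'_\omega$ and $b'_\omega$ be the sub-matrix and sub-vector of $A_\omega$ and $b_\omega$ consisting of the rows indexed by $I$, and define
\[
F:=\{z\in\mathcal{Q}_\omega\mid A'_\omega z=b'_\omega\}.
\]
By the face characterization recalled just before the corollary, $F$ is a face of $\mathcal{Q}_\omega$, and $y\in F$.

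Next I would argue $\dim F\geq 1$. By the stated formula, $\dim F=d-\mathrm{rank}(A'_\omega)$, so it suffices to show $\mathrm{rank}(A'_\omega)\leq d-1$. If instead $\mathrm{rank}(A'_\omega)=d$, then $A'_\omega$ contains an invertible $d\times d$ sub-matrix $A''_\omega$ with corresponding sub-vector $b''_\omega$; since every row of $A'_\omega$ is active at $y$, the point $y$ satisfies $A''_\omega y=b''_\omega$, which by the definition of a vertex recalled in the text would make $y$ a vertex of $\mathcal{Q}_\omega$, contradicting the hypothesis. Hence $F$ is a positive-dimensional face.

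Finally, I would produce $\epsilon>0$ witnessing that $y$ lies in the relative interior of $F$. For each $\nu\in\{0,1\}^d\setminus(I\cup\{\omega\})$ the inequality $A_\omega(\nu,\cdot)\,y<b_\omega(\nu)$ is strict, so by continuity there exists $\epsilon_\nu>0$ such that $A_\omega(\nu,\cdot)\,z<b_\omega(\nu)$ for every $z$ in the closed $\epsilon_\nu$-ball around $y$. Since there are only finitely many such $\nu$, the quantity $\epsilon:=\min_\nu\epsilon_\nu$ is strictly positive. Let $H:=\{z\in\mathbb{R}^d\mid A'_\omega z=b'_\omega\}$ denote the affine hull of $F$ (the equality of these two affine sets follows from $\dim F=d-\mathrm{rank}(A'_\omega)=\dim H$, together with $F\subseteq H$). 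Any $z\in H$ within distance $\epsilon$ of $y$ satisfies $A'_\omega z=b'_\omega$ and, by choice of $\epsilon$, the remaining inequalities of the system $A_\omega z\leq b_\omega$; therefore $z\in\mathcal{Q}_\omega$ and hence $z\in F$.

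The main obstacle, and the only point requiring some care, is the identification of $H$ with the affine hull of $F$; the matching-dimension argument above handles it, but one must first have produced the interior ball to know that $F$ is not contained in a proper affine subspace of $H$. In the write-up I would therefore produce the ball-in-$H$ argument first, deduce that the affine hull of $F$ has at least the dimension of $H$, and only then invoke equality of the two affine sets.
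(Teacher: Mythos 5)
Your proof is correct, but it takes a genuinely different route from the paper's. The paper's proof is a two-line appeal to a cited structural fact: a polyhedron is the disjoint union of the relative interiors of its faces, the relative interior of a face equals the face itself exactly when the face is a vertex, and hence a non-vertex boundary point must sit in the relative interior of some positive-dimensional face. You instead construct the face explicitly as the one cut out by the constraints active at $y$, rule out $\mathrm{rank}(A'_\omega)=d$ by observing that this would exhibit $y$ as a vertex under the paper's definition, and then verify relative interiority by hand via the continuity argument on the finitely many inactive constraints. Your approach is more elementary and self-contained (it does not rely on the face-partition theorem, only on the face characterization and the definition of a vertex, both of which the paper states explicitly), and as a bonus it identifies the face concretely as the minimal face containing $y$, which could be algorithmically useful. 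Its only delicate point is the one you flag yourself: identifying $H=\{z\mid A'_\omega z=b'_\omega\}$ with the affine hull of $F$, which must come \emph{after} the ball-in-$H$ argument (the ball shows $F$ contains a relatively open subset of $H$, forcing $\mathrm{aff}(F)=H$, since the reverse inclusion $\mathrm{aff}(F)\subseteq H$ is automatic); your proposed ordering handles this correctly. Note also that your set $I$ of active constraints is exactly the set of implicit equalities of $F$ (any constraint outside $I$ is strict at $y\in F$), so your use of the dimension formula $\dim F=d-\mathrm{rank}(A'_\omega)$ is legitimate. The paper's proof is shorter; yours is more transparent about where the face comes from.
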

\begin{proof}
First, $\mathcal Q_\omega$ equals the union of the relative interiors of its faces, which are disjoint~\cite[Corollary 2.3.7]{multiobjectiveLP}. In particular:
\begin{align*}
\partial Q_\omega&=\underset{\text{faces }F\subsetneqq\mathcal Q_\omega}{\sqcup} \text{relint}(F)\\
&=\left(\underset{\text{non-vertex faces }F\subsetneqq\mathcal Q_\omega}{\sqcup} \text{relint}(F)\right) \sqcup\left(\underset{\text{vertices }v\in\mathcal Q_\omega}{\sqcup}\{v\}\right),
\end{align*}
where $\text{relint}(\cdot)$ denotes the relative interior, and $\sqcup$ denotes a disjoint union. In particular, since a face coincides with its own relative interior if and only if it is a vertex, if $y\in\partial\mathcal Q_\omega$ but $y$ is not a vertex then $y$ must lie in the relative interior of a unique positive-dimensional face.
\end{proof}

Next we address the optimization problem in equation~(\ref{ide:optpro}) for a fixed $\omega\in\{0,1\}^d$. 
Hereafter, we abuse notation slightly and define \[f_\omega(y):=\prod_{i=1}^d(1+e^{y_i}),\]
to denote the reparameterized version of the objective function $f_\omega(q)$. The following result rules out points in the relative interior of positive-dimensional faces as maximizers of $f_\omega$.



\begin{lemma}
\label{lem:critPoints}
Let $F\subset\mathcal Q_\omega$ denote a positive-dimensional face of the polyhedron, and $\hat y$ denote a point in the relative interior of $F$. Then $f_\omega(\hat y)<\max_{y\in\mathcal Q_\omega}f_\omega(y)$. More specifically:
\begin{enumerate}
\item If the gradient $\nabla f_\omega(\hat y)$ is not orthogonal to $F$, then $f_\omega$ can be strictly increased on $F$, that is, there is some $\hat z\in F$ such that $f_\omega(\hat z)>f_\omega(\hat y)$. 
\item If the gradient $\nabla f_\omega(\hat y)$ is orthogonal to $F$, then $f_\omega(\hat y)$ is a local minimum on $F$.
\end{enumerate}
\end{lemma}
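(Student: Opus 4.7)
My plan is to exploit two structural properties of the reparameterized objective $f_\omega(y)=\prod_{i=1}^d(1+e^{y_i})$: it is strictly convex on $\mathbb R^d$, and its gradient has strictly positive entries everywhere. Strict convexity follows by observing that $\log f_\omega(y)=\sum_{i=1}^d\log(1+e^{y_i})$ has a diagonal Hessian with strictly positive entries $e^{y_i}/(1+e^{y_i})^2$, so $\log f_\omega$ is strictly convex; composing with the strictly convex, strictly increasing exponential preserves strict convexity. Positivity of the gradient is immediate from $\partial_i f_\omega(y)=e^{y_i}\prod_{j\ne i}(1+e^{y_j})>0$.

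Next I would let $V\subset\mathbb R^d$ denote the linear subspace parallel to the affine hull $\text{aff}(F)$, and decompose $\nabla f_\omega(\hat y)=v+v^\perp$ with $v\in V$ and $v^\perp\in V^\perp$. This gives a clean dichotomy matching the two cases. For part (1), I assume $v\ne 0$. Since $\hat y\in\text{relint}(F)$, the points $\hat y+tv$ lie in $F$ for all sufficiently small $t>0$, and a first-order Taylor expansion gives
\[
f_\omega(\hat y+tv)=f_\omega(\hat y)+t\,\nabla f_\omega(\hat y)^{\!\top}v+o(t)=f_\omega(\hat y)+t\|v\|^2+o(t),
\]
which is strictly larger than $f_\omega(\hat y)$ for small enough $t>0$. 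Taking $\hat z:=\hat y+tv$ then satisfies the claim.

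For part (2), I assume $v=0$, so that $\hat y$ is a critical point of the restriction $f_\omega|_{\text{aff}(F)}$. Strict convexity of $f_\omega$ forces $\hat y$ to be the unique global minimizer of this restriction, and hence a strict local minimum on $F$. To deduce the overarching strict inequality $f_\omega(\hat y)<\max_{y\in\mathcal Q_\omega}f_\omega(y)$, note that in case (1) the point $\hat z$ already witnesses it. In case (2) the positive dimensionality of $F$ lets me pick any $u\in V\setminus\{0\}$; because $\hat y\in\text{relint}(F)$, the points $\hat y\pm tu$ belong to $F\subset\mathcal Q_\omega$ for all small $t>0$, and strict convexity together with $\hat y$ minimizing $f_\omega$ on $\text{aff}(F)$ yields $f_\omega(\hat y+tu)>f_\omega(\hat y)$. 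So $\hat y$ is never a maximizer.

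The main obstacle, beyond bookkeeping the orthogonal decomposition of $\nabla f_\omega(\hat y)$ relative to $V$, is verifying strict convexity carefully enough that the critical-point argument in part (2) delivers a strict local minimum rather than just a stationary point; without strictness one could only conclude that $\hat y$ is a min-or-saddle on $\text{aff}(F)$, which would not suffice to rule out $\hat y$ being a local maximizer on $F$.
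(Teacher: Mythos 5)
Your proof is correct and follows essentially the same route as the paper: decompose $\nabla f_\omega(\hat y)$ relative to the direction space of $F$, move along its projection onto $F$ for case (1), and invoke strict convexity of $f_\omega$ for case (2). The only real difference is how strict convexity is established—you note that $\log f_\omega(y)=\sum_{i=1}^d\log(1+e^{y_i})$ has a positive diagonal Hessian and compose with the exponential, whereas the paper computes $\nabla^2 f_\omega(y)=f_\omega(y)\bigl(\Gamma_1+\gamma\gamma^t\bigr)$ with $\Gamma_1$ a positive diagonal matrix; both are valid, and yours is arguably cleaner. You are also a bit more explicit than the paper in case (2), using the positive dimensionality of $F$ to exhibit a point $\hat y+tu\in F$ with strictly larger value, which is the step that actually delivers the headline inequality $f_\omega(\hat y)<\max_{y\in\mathcal Q_\omega}f_\omega(y)$ rather than merely the local-minimum statement.
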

\begin{proof}
Clearly, $f_\omega$ is analytic, in particular, it has continuous partial derivatives of any order. 

First observe that
\[\frac{\partial  f_\omega}{\partial y_i}(y) = e^{y_i}\prod_{j\neq i}(1+e^{y_j}) = f_\omega(y)\,\frac{e^{y_i}}{1+e^{y_i}}.\]
Therefore, if $y\rightarrow\gamma_i:=\frac{e^{y_i}}{1+e^{y_i}}$, and $y\rightarrow\gamma$ is the  transformation defined as $\gamma=\left(\gamma_1,\ldots,\gamma_d\right)^t$ then
\[\nabla f_\omega(y)=f_\omega(y) \,\gamma,\]
which implies that $\nabla f_\omega(y)\ne0$, for all $y\in\mathbb{R}^d$. In particular, if $\nabla f_\omega(y)$ is not orthogonal to $F$, a small perturbation in the direction of the  projection of $\nabla f_\omega(y)$ onto $F$ will increase $f_\omega$. This shows the first statement in the lemma.

On the other hand: 
\[\frac{\partial^2 f_\omega}{\partial y_i^2}(y) = e^{y_i}\prod_{j\neq i}(1+e^{y_j})=f_\omega(y)\,\gamma_i,\]
and for $i\ne j$:
\[\frac{\partial^2f_\omega}{\partial y_j y_i}(y) = e^{y_i}e^{y_j}\prod_{k\neq i,j}(1+e^{y_k}) = f_\omega(y)\,\gamma_i\gamma_j.\]
As a result, $\nabla^2 f_\omega(y)$, the Hessian matrix of $f_\omega$ at $y$, admits the decomposition:
\[\nabla^2 f_\omega(y) = f_\omega(y)\,\big(\Gamma_1+\Gamma_2\big),\]
where $\Gamma_1 := 
\hbox{diag}\big(\gamma_1(1-\gamma_1),\dots,\gamma_d(1-\gamma_d)\big)$, and
\[\Gamma_2 := 
\begin{bmatrix}
\gamma_1^2 & \gamma_1\gamma_2 &  \dots  & \gamma_1\gamma_d\\
\gamma_2\gamma_1 & \gamma_2^2 & \dots & \gamma_2\gamma_d \\
\vdots & \vdots & \ddots & \vdots\\
\gamma_d\gamma_1 & \gamma_d\gamma_2 & \dots & \gamma_d^2
\end{bmatrix}=\gamma\gamma^t.
\]
Because each $0<\gamma_i<1$, $\Gamma_1$ is strictly positive semidefinite. Since $\Gamma_2$ is positive definite, and $f_\omega(y)>0$ for all $y\in\mathbb{R}^d$, $\nabla^2 f_\omega(y)$ is strictly positive definite regardless of $y$. As a result, if $\nabla f_\omega(\hat y)$ is orthogonal to $F$, then $\hat y$ is a local minimum of $f_\omega$ along $F$. This completes the  proof of the lemma.
\end{proof}

Finally, combining  Corollary~\ref{cor:verts} and Lemma~\ref{lem:critPoints}, we obtain the following central result, which implies that the maxima in~(\ref{ide:optpro}) can occur only occur among a finite number of well-characterized points in $Q_\omega$.

\begin{theorem}
If $P>0$ then, for each $\omega\in\{0,1\}^d$, the maximum $\max\limits_{y\in\mathcal Q_\omega} f_\omega(y)$, can only occur at a vertex of $\mathcal Q_\omega$.
\end{theorem}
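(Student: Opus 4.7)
My plan is to pick an arbitrary maximizer $y^\star \in \mathcal{Q}_\omega$ of $f_\omega$ and argue that the only faces of $\mathcal{Q}_\omega$ whose relative interior can contain such a point are vertices. The existence of such a $y^\star$ is built into the statement of the theorem (and, implicitly, into Lemma~\ref{lem:formula}, where the maximum is asserted rather than a supremum), so I take it for granted.

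The first step is to invoke the face partition already used in the proof of Corollary~\ref{cor:verts}: the polyhedron $\mathcal{Q}_\omega$ equals the disjoint union of the relative interiors of its non-empty faces (see \cite[Corollary 2.3.7]{multiobjectiveLP}). Hence $y^\star$ lies in the relative interior of a unique face $F\subseteq\mathcal{Q}_\omega$, and it is enough to show $\dim F = 0$.

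The second and decisive step is to rule out the alternative $\dim F\ge 1$ by a direct appeal to Lemma~\ref{lem:critPoints}. If $F$ were positive-dimensional, setting $\hat y := y^\star$ in that lemma would yield
\[
f_\omega(y^\star)\;<\;\max_{y\in\mathcal{Q}_\omega} f_\omega(y),
\]
contradicting the maximality of $y^\star$. Therefore $F$ must be zero-dimensional, i.e.\ $F=\{y^\star\}$, which is precisely the claim that $y^\star$ is a vertex of $\mathcal{Q}_\omega$.

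The main subtlety, and as far as I can see the only obstacle, is a bookkeeping one: the partition into relative interiors of faces must cover every point of $\mathcal{Q}_\omega$, including the relative interior of the top face $\mathcal{Q}_\omega$ itself (which coincides with the topological interior whenever $\mathcal{Q}_\omega$ is full-dimensional). Happily, Lemma~\ref{lem:critPoints} imposes no properness condition on $F$, so it simultaneously rules out interior maximizers and maximizers on proper positive-dimensional faces. With that observation the theorem collapses to a one-line combination of Corollary~\ref{cor:verts} and Lemma~\ref{lem:critPoints}.
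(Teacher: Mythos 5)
Your proposal is correct and follows essentially the same route the paper intends: partition $\mathcal{Q}_\omega$ into the relative interiors of its faces (as in Corollary~\ref{cor:verts}) and then use Lemma~\ref{lem:critPoints} to exclude every positive-dimensional face, including the improper top face, so that any maximizer must be a vertex. Your remark that Lemma~\ref{lem:critPoints} also covers the relative interior of $\mathcal{Q}_\omega$ itself is exactly the bookkeeping point needed to make the one-line combination rigorous.
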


\section{Algorithms for $\lambda(P)$}

Computing $\lambda(P)$ requires solving the optimization problem~(\ref{ide:optpro}) for each of the $2^d$ outcomes. As previously described, solving each optimization problem can be achieved by evaluating $f_\omega$ at each vertex of $\mathcal Q_\omega$, and the vertices can be found as unique solutions of invertible $(d\times d)$-subsystems $A'_\omega y=b'_\omega$. This motivates Algorithm~\ref{alg1}, which computes $\lambda(P)$ by exploring square subsystems of $A_\omega y\leq b_\omega$ to find vertices, evaluating $f_\omega(y^*)$ at each vertex $y^*$ for each outcome $\omega$, and returning the largest of these.

\begin{algorithm} 
\caption{A na\"ive exact algorithm for $\lambda(P)$} 
\label{alg1} 
\begin{algorithmic} 
    \State {$M \Leftarrow 0$}
    \For{$\omega\in\{0,1\}^d$}
        \State {$A_\omega \Leftarrow \{\llbracket \omega_i\neq \nu_i\rrbracket_{i=1,\ldots,d} \}_{\nu\in\{0,1\}^d\setminus\{\omega\}}$}
        \State {$b_\omega\Leftarrow \{\log(\frac{P(\nu)}{P(\omega)})\}_{\nu\in\{0,1\}^d\setminus\{\omega\}}$}
        \For{$\{\nu^{(1)},\nu^{(2)},\ldots,\nu^{(d)}\}\subset\{0,1\}^d\setminus\{\omega\}$}
        
            \State {$A'_\omega\Leftarrow \{\llbracket \omega_i\neq \nu_i^{(j)}\rrbracket_{i=1,\ldots,d}\}_{j=1,\ldots,d}$}
            \State {$b'_\omega\Leftarrow \{\log(\frac{P(\nu^{(j)})}{P(\omega)})\}_{j=1,\ldots,d}$}
            \If{$A'_\omega$ is invertible}
                \State {$y*\Leftarrow (A'_\omega)^{-1} b'_\omega$}
                \If{$A_\omega y'^*\leq b_\omega$}
                    \State {$M\Leftarrow M\vee f_\omega(y'^*)$}
                \EndIf
            \EndIf
        \EndFor
    \EndFor
    \\
\Return{$\lambda(P)\Leftarrow M$}
\end{algorithmic}
\end{algorithm}

For each outcome $\omega$, there are $\binom{2^d-1}{d}$ subsystems $A'_\omega y=b'_\omega$ of size $(d\times d)$ to check. For each subsystem $A'_\omega y=b'_\omega$, simple Gaussian elimination will find a unique solution, if it exists, in $O(d^3)$ time, and often terminates in less time if $A'_\omega$ is singular. If $y'$ is a unique solution to the square subsystem $A'_\omega y'=b'_\omega$, it takes $O(d2^d)$ operations to check that $y'$ is feasible, i.e., $A_\omega y'\leq b_\omega$. If $y'$ is infeasible, it often takes many fewer operations to confirm this.

Taking these operations together, and using the well-known bound on binomial coefficients, $\binom{n}{k}<(\frac{n\cdot e}{k})^k$, in the worst case there are 
$O\left(d^4(\frac{2^{d+1}e}{d})^d\right)$ operations required to compute $\lambda(P)$. The memory required by this algorithm grows much less slowly, as $O(2^d)$, if square subsystems are iterated without loading every set of $d$ indices into memory. This is common in standard combinatorial software like the \texttt{itertools} module in Python~\cite[Section 3.2]{python3standard}. In practice, we find that without any parallelization strategies and without supercomputing resources, it is feasible to compute $\lambda(P)$ for binary sources up to dimension $d=6$ by na\"ively searching for vertices.

We note that specialized algorithms to explore only those subsystems $A'_\omega y = b'_\omega$ which are invertible, and ignore singular subsystems, are still unlikely to allow computation of $\lambda(P)$ in very high dimensions. In fact, the number of invertible submatrices $A'_\omega$ of dimension $d$ has previously been recognized as a noteworthy sequence~\cite{oeis}. This sequence is hard to compute explicitly, but appears to grow exponentially fast. In fact, there are approximately $2.52\times10^{14}$ invertible subsystems in only $8$ binary dimensions~\cite{small01matrices}.

Specialized polyhedral programming algorithms may help to accelerate computation of $\lambda(P)$. For example, the vertex enumeration algorithm given in~\cite{vertexenumeration}, runs in $O(d2^dV)$ time, where $V$ is the number of vertices of $\mathcal Q_\omega$. The number of vertices is hard to characterize (it depends on $b_\omega$), but based on simulation we believe it is typically much smaller than the number of invertible subsystems. We believe a pivoting method similar to~\cite{vertexenumeration} can be adapted to take advantage of $A_\omega$'s binary structure. 

Some readers may note that each optimization program \[\max f_\omega(y)\text{ s.t. }A_\omega y\leq b_\omega\]
resembles a linear program. However, our objective function $f_\omega$ is nonlinear, and therefore linear programming techniques such as Dantzig's simplex algorithm~\cite[Chapter 5]{simplexmethod} are not suitable. Moreover, positive definiteness of the Hessian derived in Lemma~\ref{lem:critPoints} implies that $f_\omega$ is strictly convex. Although the feasible set is also convex, the fact that we seek to \emph{maximize} $f_\omega$ means most nonlinear convex programming techniques cannot be guaranteed to converge to true maxima. 

Due to the aforementioned difficulties in the combinatorial approach in high dimensions, we have also explored numerical approximation of each optimization program using nonlinear algorithms including sequential gradient-free linear approximation (COBYLA)~\cite{COBYLA}, and sequential quadratic programming (SLSQP)~\cite{SLSQP}. These show some promise but tend to suffer from numerical instability in moderate to high dimensions (above $d=5$ or so). However, because the structure of $f_\omega$ makes computing higher-order derivatives very straightforward, it may be possible to devise a specialized interior point method that makes approximating $\lambda(P)$ efficient even in higher dimensions.

\section{Proof of Concept}

Consider the Markov network~\cite{probGraphMod} in Figure~\ref{fig:MRF}, borrowed from~\cite[Chapter 2]{graphmod}. In this setting, undirected edges represent interactions in a social network of four patients, each of whom may or may not have tuberculosis (represented as four Bernoulli random variables $T_1,\ldots,T_4$). Here, the complete subgraphs (cliques) of the Markov network are $\{T_1\},\{T_2\},\{T_3\},\{T_4\},\{T_1,T_2\},\{T_1,T_3\},\{T_2,T_4\}$, and $\{T_3,T_4\}$. To each clique $C$ we associate a \emph{factor} $w:\{0,1\}^{|C|}\to\mathbb R_+$, and we associate to each configuration $(t_1,t_2,t_3,t_4)\in\{0,1\}^4$ of sick and healthy patients, the probability: 
\[P(t_1,t_2,t_3,t_4) \propto w(t_1)\cdot w(t_2)\cdot w(t_3)\cdot w(t_4)\cdot w(t_1,t_2)\cdot w(t_1,t_3)\cdot w(t_2,t_4)\cdot w(t_3,t_4).\]

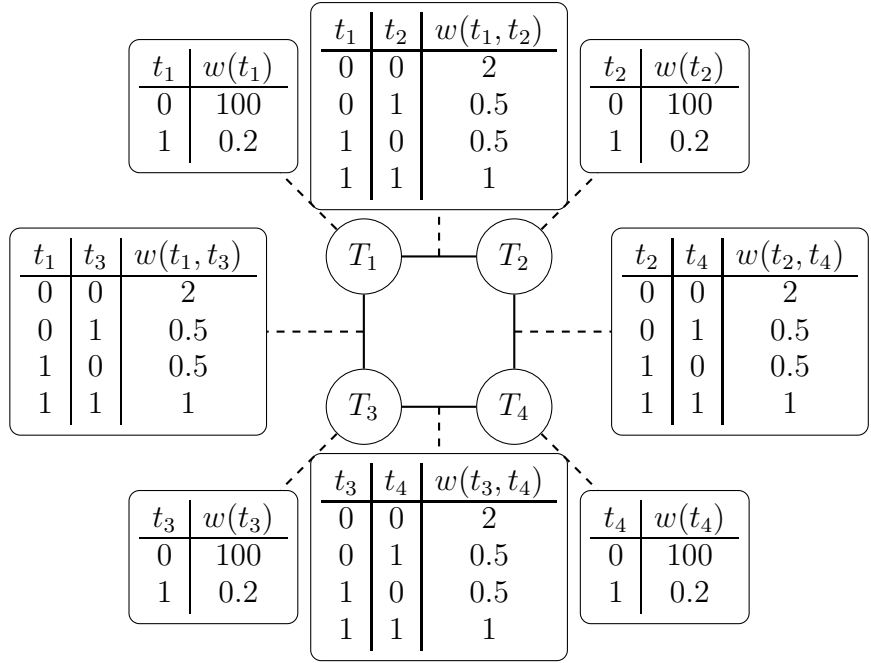
\begin{figure}
\centering
\begin{tikzpicture}[node distance=2cm]
\node (T1)[RV]{$T_1$};
\node (T2)[RV,right of=T1]{$T_2$};
\node (T3)[RV,below of=T1]{$T_3$};
\node (T4)[RV,below of=T2]{$T_4$};
\node (T1d)[dist,above of=T1,xshift=-2cm]{\begin{tabular}{c|c}
    $t_1$ & $w(t_1)$ \\
    \hline
    $0$ & $100$\\
    $1$ & $0.2$
\end{tabular}};
\node (T2d)[dist,above of=T2,xshift=2cm]{\begin{tabular}{c|c}
    $t_2$ & $w(t_2)$ \\
    \hline
    $0$ & $100$\\
    $1$ & $0.2$
\end{tabular}};
\node (T3d)[dist,below of=T3,xshift=-2cm]{\begin{tabular}{c|c}
    $t_3$ & $w(t_3)$ \\
    \hline
    $0$ & $100$\\
    $1$ & $0.2$
\end{tabular}};
\node (T4d)[dist,below of=T4,xshift=2cm]{\begin{tabular}{c|c}
    $t_4$ & $w(t_4)$ \\
    \hline
    $0$ & $100$\\
    $1$ & $0.2$
\end{tabular}};
\coordinate[right of=T1,xshift=-1cm](T12);
\coordinate[right of=T3,xshift=-1cm](T34);
\coordinate[below of=T1,yshift=1cm](T13);
\coordinate[below of=T2,yshift=1cm](T24);
\node (T12d)[dist,above of=T12]{\begin{tabular}{c|c|c}
    $t_1$ & $t_2$ & $w(t_1,t_2)$ \\
    \hline
    $0$ & $0$ & $2$\\
    $0$ & $1$ & $0.5$\\
    $1$ & $0$ & $0.5$\\
    $1$ & $1$ & $1$
\end{tabular}};
\node (T34d)[dist,below of=T34]{\begin{tabular}{c|c|c}
    $t_3$ & $t_4$ & $w(t_3,t_4)$ \\
    \hline
    $0$ & $0$ & $2$\\
    $0$ & $1$ & $0.5$\\
    $1$ & $0$ & $0.5$\\
    $1$ & $1$ & $1$
\end{tabular}};
\node (T13d)[dist,left of=T13,xshift=-1cm]{\begin{tabular}{c|c|c}
    $t_1$ & $t_3$ & $w(t_1,t_3)$ \\
    \hline
    $0$ & $0$ & $2$\\
    $0$ & $1$ & $0.5$\\
    $1$ & $0$ & $0.5$\\
    $1$ & $1$ & $1$
\end{tabular}};
\node (T24d)[dist,right of=T24,xshift=1cm]{\begin{tabular}{c|c|c}
    $t_2$ & $t_4$ & $w(t_2,t_4)$ \\
    \hline
    $0$ & $0$ & $2$\\
    $0$ & $1$ & $0.5$\\
    $1$ & $0$ & $0.5$\\
    $1$ & $1$ & $1$
\end{tabular}};
\draw [line] (T1)--(T2);
\draw [line] (T1)--(T3);
\draw [line] (T2)--(T4);
\draw [line] (T3)--(T4);
\draw [desc] (T1)--(T1d);
\draw [desc] (T2)--(T2d);
\draw [desc] (T3)--(T3d);
\draw [desc] (T4)--(T4d);
\draw [desc] (T12)--(T12d);
\draw [desc] (T13)--(T13d);
\draw [desc] (T24)--(T24d);
\draw [desc] (T34)--(T34d);
\end{tikzpicture}
\caption{Markov network that models the interaction of four hypothetical patients that may or may not have tuberculosis. Patient $i$ is healthy if $T_i=0$, and infected if $T_i=1$.}
\label{fig:MRF}
\end{figure}

This network reflects the intuition that, if one patient who has tuberculosis interacts with another, it is more likely for the latter to have tuberculosis. In fact, the joint distribution $P$ of $(T_1,T_2,T_3,T_4)$ is exchangeable (labels on the patients can be permuted without affecting the joint probability of their tuberculosis status). Using Algorithm~\ref{alg1}, we find that $\lambda(P)$ is very close to $1$. We transform a vertex $y^*$ which achieves $\lambda(P)$ back to a probability $q^*$ and find explicitly: 
\[P = 0.999999\cdot Be(0.000125)\otimes Be(0.000125)\otimes Be(0.000125)\otimes Be(0.000125)+0.0000001\cdot R,\]
where $R$ is a residual probability distribution with low entropy ($\approx 2$ bits, compared to the uniform distribution over $\{0,1\}^4$, which has $4$ bits of entropy). This means that, despite the dependence implied by the interactions, a large fraction of the time it will appear as though the patients are infected with tuberculosis independently, each with a very small probability of infection. 

It is not always the case that a source represented by a probabilistic graphical model has a large independent weight. Consider a simpler version of the Markov network, shown in Figure~\ref{fig:MRF2x2}. In this case, a non-negligible fraction of the data produced by the source cannot be recapitulated by an independent model. Let $P$ denote the joint distribution of $(T_1,T_2)$. Using Algorithm~\ref{alg1}, we find that $\lambda(P)=0.817$. Moreover,
\[P = 0.817\cdot Be(0.048)\otimes Be(0.048)+0.183\cdot \delta_{(1,1)}.\]
That is, a large fraction of the time a realization of these two patients' tuberculosis states cannot be attributed to the largest independent component of $P$. 

These two examples demonstrate how scientists and engineers may benefit from detecting a source's independent weight. If a source under study is known to have $\lambda(P)\approx1$, even if the source fails a hypothesis test of independence, the modeler might save considerable complexity while still recapitulating most of the features of the source. In contrast, if a source has very low independent weight, the scientist could find meaningful mechanistic insights in the residual component, such as in the latter example, where a sample originates \emph{either} from a hidden independent model or a deterministic one.

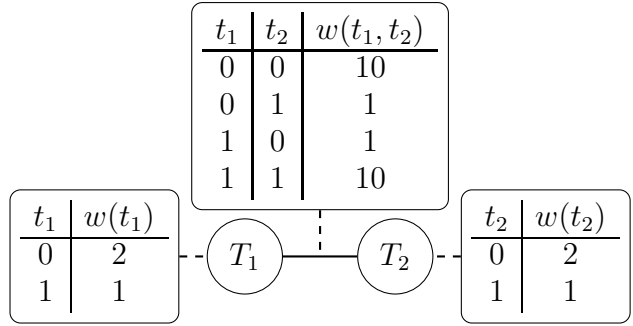
\begin{figure}
\centering
\begin{tikzpicture}[node distance=2cm]
\node (T1)[RV]{$T_1$};
\node (T2)[RV,right of=T1]{$T_2$};
\coordinate[right of=T1,xshift=-1cm](T12);
\node (T1d)[dist,left of=T1]{\begin{tabular}{c|c}
    $t_1$ & $w(t_1)$ \\
    \hline
    $0$ & $2$\\
    $1$ & $1$
\end{tabular}};
\node (T2d)[dist,right of=T2]{\begin{tabular}{c|c}
    $t_2$ & $w(t_2)$ \\
    \hline
    $0$ & $2$\\
    $1$ & $1$
\end{tabular}};
\node (T12d)[dist,above of=T12]{\begin{tabular}{c|c|c}
    $t_1$ & $t_2$ & $w(t_1,t_2)$ \\
    \hline
    $0$ & $0$ & $10$\\
    $0$ & $1$ & $1$\\
    $1$ & $0$ & $1$\\
    $1$ & $1$ & $10$
\end{tabular}};
\draw[line](T1)--(T2);
\draw[desc](T1d)--(T1);
\draw[desc](T2d)--(T2);
\draw[desc](T12d)--(T12);
\end{tikzpicture}
\caption{Markov network that models the interaction of two hypothetical patients which may or may not have tuberculosis. In this setting, the marginal probability of a patient being infected with tuberculosis is moderate ($\approx22\%$), and the probability that exactly one of the two patients is infected is relatively low ($\approx8\%$). This might be a realistic model for, e.g., two inmates sharing a cell in a prison with a tuberculosis outbreak.}
\label{fig:MRF2x2}
\end{figure}



\bibliographystyle{abbrv}

\bibliography{biblio}

\end{document}